\documentclass{article}
\usepackage{amsmath, amssymb, amsthm}
\usepackage{mathtools}
\usepackage{mathrsfs}

\numberwithin{equation}{section}

\newcommand{\cF}{\mathcal{F}}
\newcommand{\cO}{\mathcal{O}}
\newcommand{\bbR}{\mathbb{R}}
\newcommand{\bbC}{\mathbb{C}}
\newcommand{\bbK}{\mathbb{K}}

\newcommand{\1}{\mathbf{1}}
\newcommand{\Ibar}{\bar{I}}
\DeclareMathOperator*{\esssup}{ess\,sup}

\newtheorem{theorem}{Theorem}[section]
\newtheorem{lemma}[theorem]{Lemma}
\newtheorem{proposition}[theorem]{Proposition}
\newtheorem{definition}[theorem]{Definition}
\theoremstyle{remark}

\title{A Fixed Point Theorem for Random Asymptotically Pointwise\\[2pt]
Boyd--Wong Contractions}

\author{Jie Shi \\
Department of Mathematics and Statistics, Hubei Engineering University \\
272 Traffic Avenue, Xiaogan, 432000, Hubei Province, P.R. China}
\date{April 13, 2026}

\begin{document}

\maketitle

\begin{abstract}
We prove a fixed point theorem for random asymptotically pointwise contractions of Boyd-Wong type in random normed modules. The deterministic contraction function psi satisfies psi(t) < t for t > 0 and is right upper semicontinuous, generalizing the classical linear case. By strengthening the convergence condition to almost sure uniform convergence on the entire domain and assuming fibre-wise continuity, we are able to decompose the random mapping into a measurable family of deterministic mappings. Each fibre mapping is shown to be an asymptotic pointwise psi-contraction satisfying the hypotheses of a deterministic fixed point theorem, which we prove as a lemma. Gluing the pointwise fixed points via sigma-stability yields a unique random fixed point, and all iterates converge in the (epsilon, lambda)-topology. Every logical step is presented in full detail, with careful attention to the transfer of conditions from the random to the deterministic level.
\end{abstract}
\medskip
\noindent\textbf{Keywords:} Random fixed point; Boyd--Wong contraction; asymptotic pointwise contraction; random normed module; sigma-stability.
\section{Introduction}

Fixed point theory is a central pillar of functional analysis with deep roots in topology and metric geometry.
The first landmark result was obtained by Brouwer \cite{Brouwer1912} in 1912, who proved that every continuous
self-map of a finite-dimensional closed ball admits a fixed point, inaugurating topological fixed point theory.
In 1922, Banach \cite{Banach1922} established the celebrated contraction mapping principle:
on a complete metric space \((X,d)\), any map \(T:X\to X\) satisfying
\begin{equation}
d(Tx,Ty)\le \alpha d(x,y),\qquad \alpha\in(0,1),
\end{equation}
has a unique fixed point and the iterates converge to it. This principle became the cornerstone of metric
fixed point theory and found applications in differential equations, optimization, and numerical analysis.

The subsequent decades saw intense efforts to relax the contraction condition.
In 1965, Browder \cite{Browder1965}, G\"ohde, and Kirk \cite{Kirk1965} independently introduced
nonexpansive mappings, where the Lipschitz constant is at most \(1\):
\begin{equation}
d(Tx,Ty)\le d(x,y).
\end{equation}
They proved existence of fixed points in reflexive Banach spaces with normal structure, using geometric
properties of the space rather than strict contraction.

A major breakthrough in the direction of nonlinear contractions was made by Boyd and Wong \cite{BoydWong1969}
in 1969. They replaced the constant factor \(\alpha\) by a function \(\psi:[0,\infty)\to[0,\infty)\) that is
nondecreasing, right upper semicontinuous, and satisfies \(\psi(t)<t\) for all \(t>0\). Under the condition
\begin{equation}
d(Tx,Ty)\le \psi(d(x,y)),
\end{equation}
they proved existence and uniqueness of fixed points, unifying many earlier results. This framework,
now called Boyd--Wong contractions, became a standard tool.

In 1974, \'{C}iri\'{c} \cite{Ciric1974} extended the classical contraction by allowing the right-hand side
to involve the maximum of five distances instead of a single distance:
\begin{equation}
d(Tx,Ty)\le q\max\bigl\{d(x,y),\,d(x,Tx),\,d(y,Ty),\,d(x,Ty),\,d(y,Tx)\bigr\},\quad q<1.
\end{equation}
The five-term maximum captures more local information about the points and their images, and it has
inspired numerous generalizations, including quasi-contractions and various types of weakly contractive maps.

Another direction was the study of asymptotic fixed point theory, initiated by Goebel and Kirk
\cite{GoebelKirk1972} in 1972. They considered asymptotically nonexpansive maps, where the Lipschitz
constants of the iterates tend to \(1\):
\begin{equation}
\|T^m x-T^m y\|\le k_m\|x-y\|,\quad k_m\to1.
\end{equation}
They proved existence of fixed points in bounded closed convex subsets of uniformly convex Banach spaces.
In 2003, Kirk \cite{Kirk2003} introduced the concept of asymptotic contractions, where the \(n\)-th iterate
satisfies
\begin{equation}
d(T^n x,T^n y)\le \psi_n(d(x,y)),
\end{equation}
with \(\psi_n\) converging uniformly on bounded intervals to a Boyd--Wong function \(\psi\). He showed
that if \(T\) is continuous and some orbit is bounded, then the iterates converge to a unique fixed point.

More recently, Kirk's theorem has been further generalized to \emph{asymptotic pointwise contractions}.
In this setting, the contraction estimate involves the maximum \(M(x,y)\) of five distances, and the
convergence of the bounding sequence \(\phi_n(x,y)\) to a limit \(\phi(x,y)\) is only required locally
uniformly on bounded sets. The deterministic fixed point theorem for such mappings, when \(\phi(x,y)\le
\lambda\,M(x,y)\) with \(\lambda<1\), is relatively straightforward using tail diameter estimates.
The nonlinear case, where \(\phi(x,y)\le\psi(M(x,y))\) with a Boyd--Wong function \(\psi\), requires
a more delicate argument exploiting the right upper semicontinuity of \(\psi\). We present a complete
self-contained proof of this general deterministic result as a lemma inside the main random theorem.

Parallel to these deterministic developments, random functional analysis emerged as a powerful tool
for studying stochastic equations. Guo \cite{Guo2010} systematically developed the theory of random
normed modules (RN modules), where the norm of an element is an \(L^0\)-valued random variable.
The \((\epsilon,\lambda)\)-topology generalizes convergence in probability, making it possible to
formulate fixed point problems for random operators. In 2025, Sun, Guo and Tu \cite{SunGuoTu2025}
extended the Goebel--Kirk theorem to random asymptotically nonexpansive mappings in uniformly convex
RN modules. Their key technique was a decomposition method based on \(\sigma\)-stability and the local
property of maps, which allows one to disassemble a random map into a family of deterministic maps
and then glue the individual solutions together.

The present paper aims to prove a random fixed point theorem for asymptotic pointwise Boyd--Wong
contractions by merging the deterministic theory of such maps with the \(\sigma\)-stability decomposition
technique. To apply the decomposition method, we need to ensure that the deterministic hypotheses
hold on almost every fibre. This forces us to strengthen the usual random convergence conditions:
we require that the contraction bounds converge \emph{uniformly on the whole domain almost surely},
and that the map is \emph{fibre-wise continuous}. Additionally, the almost sure inequality
\(\Phi(x,y)\le\psi(M(x,y))\) must hold globally for all pairs \((x,y)\) on a common set of full measure.
These conditions, although stronger than the usual local uniform convergence in probability, are
natural in the pointwise fibre setting and are satisfied in many situations.

\section{Preliminaries}

Let \((\Omega,\cF,P)\) be a probability space and \(L^0(\cF,\bbK)\) the algebra of equivalence classes of
\(\bbK\)-valued (\(\bbK=\bbR\) or \(\bbC\)) random variables. As usual, \(L_+^0(\cF)\) denotes the subset of
nonnegative random variables. For \(A\in\cF\), \(\Ibar_A\) is the equivalence class of the indicator function
\(\1_A\).

\begin{definition}[Random normed module]
A \emph{random normed module} (RN module) over \(\bbK\) with base \((\Omega,\cF,P)\) is a pair
\((E,\|\cdot\|)\) where \(E\) is a left \(L^0(\cF,\bbK)\)-module and \(\|\cdot\|:E\to L_+^0(\cF)\)
satisfies, for all \(x,y\in E\) and \(\xi\in L^0(\cF,\bbK)\),
\begin{enumerate}
\item \(\|x\|=0\) iff \(x=\theta\) (the zero element);
\item \(\|\xi x\|=|\xi|\|x\|\);
\item \(\|x+y\|\le \|x\|+\|y\|\).
\end{enumerate}
We always assume that \(E\) is complete with respect to the \((\epsilon,\lambda)\)-topology defined below.
\end{definition}

The \((\epsilon,\lambda)\)-topology on \(E\) is the linear topology generated by the neighbourhoods
\begin{equation}
N_\theta(\epsilon,\lambda)=\{x\in E: P(\|x\|<\epsilon)\ge 1-\lambda\},\qquad
\epsilon>0,\;0<\lambda<1.
\end{equation}
A sequence \(\{x_n\}\) converges to \(x\) in this topology iff \(\|x_n-x\|\) converges to \(0\) in probability.
This topology is metrizable and turns \(E\) into a topological \(L^0\)-module.

\begin{definition}[\(\sigma\)-stability, local property, \(L^0\)-closedness]
Let \(G\subset E\).
\begin{itemize}
\item \(G\) is called \emph{\(\sigma\)-stable} if for every countable measurable partition \(\{A_n\}\) of
\(\Omega\) and every sequence \(\{x_n\}\subset G\) there exists a unique element \(x\in G\), denoted
\(\sum_n \Ibar_{A_n}x_n\), such that \(\Ibar_{A_n}x = \Ibar_{A_n}x_n\) for all \(n\).
\item A set \(G\) is \emph{stable} if \(\Ibar_A x\in G\) for all \(A\in\cF\) and \(x\in G\).
\item A map \(f:G\to E\) has the \emph{local property} if \(G\) is stable and
\(\Ibar_A f(x) = f(\Ibar_A x)\) for all \(A\in\cF\) and \(x\in G\).
\item \(G\) is called \emph{\(L^0\)-closed} if it is closed in the \((\epsilon,\lambda)\)-topology.
\end{itemize}
\end{definition}

If \(G\) is \(\sigma\)-stable and contains the zero element \(\theta\), then it is automatically stable:
for any \(A\in\cF\) and \(x\in G\), using the partition \(\{A,\Omega\setminus A\}\) and the constant
sequence \(\{x,\theta,\theta,\dots\}\) one obtains \(\Ibar_A x = \Ibar_A x + \Ibar_{\Omega\setminus A}\theta \in G\).
For this reason we shall always assume \(\theta\in G\) when using \(\sigma\)-stable sets.

The space \(L^0(\cF,\bbR)\) is Dedekind complete: every family \(\{\xi_\alpha\}\subset L^0\) that is bounded
above possesses an essential supremum \(\esssup_\alpha \xi_\alpha\) in \(L^0\) (see, e.g., \cite{DunfordSchwartz1958}).
This fact will be used to formulate the uniform convergence condition in a measurable way.

The following proposition is the crucial fibre representation theorem. It allows us to view a \(\sigma\)-stable
set equipped with a local map as a measurable family of complete metric spaces and deterministic maps.
We give a complete proof because the construction is repeatedly used throughout the paper.
In order to define the fibre mappings unambiguously, we additionally require that the map \(f\) is
\emph{pointwise}, i.e., there exists an \(\cF\otimes\mathcal{B}(\bbR)\)-measurable function
\(\tilde f:\Omega\times\bbR\to\bbR\) such that for every \(x\in G\) and a.e.\ \(\omega\),
\(f(x)(\omega)=\tilde f(\omega,\tilde x(\omega))\). This condition is automatically satisfied when
the map is defined by a pointwise composition formula, as in our application.

\begin{proposition}[Fibre representation]\label{prop:fibre}
Let \(G\subset E\) be \(\sigma\)-stable, \(L^0\)-closed and contain \(\theta\). Let \(f:G\to G\) have the
local property and be pointwise, i.e., there exists an \(\cF\otimes\mathcal{B}(\bbR)\)-measurable function
\(\tilde f:\Omega\times\bbR\to\bbR\) such that for every \(x\in G\) and a.e.\ \(\omega\),
\(f(x)(\omega)=\tilde f(\omega,\tilde x(\omega))\). Then there exist a measurable family of complete metric
spaces \((G_\omega,d_\omega)_{\omega\in\Omega}\) and a family of maps \(f_\omega:G_\omega\to G_\omega\)
such that:
\begin{enumerate}
\item Each \(x\in G\) corresponds to a measurable section \(\omega\mapsto x(\omega)\in G_\omega\) with
\(\|x\|(\omega)=d_\omega(x(\omega),\theta_\omega)\) a.e., where \(\theta_\omega\) is the zero element of
\(G_\omega\); this correspondence is bijective modulo a.e.\ equality.
\item A sequence \(\{x_n\}\subset G\) converges to \(x\in G\) in the \((\epsilon,\lambda)\)-topology iff
\(d_\omega(x_n(\omega),x(\omega))\to 0\) in probability as \(n\to\infty\).
\item \((f(x))(\omega)=f_\omega(x(\omega))\) for a.e.\ \(\omega\).
\item The family \(\{G_\omega\}\) is essentially unique, and for a.e.\ \(\omega\),
\(G_\omega=\{x(\omega):x\in G\}\).
\end{enumerate}
\end{proposition}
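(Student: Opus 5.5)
The plan is to build the fibres directly from representatives, essentially a Gelfand-type ``evaluation at \(\omega\)'' construction, and to let \(\sigma\)-stability supply the gluing. The hypothesis that \(f\) is pointwise through \(\tilde f:\Omega\times\bbR\to\bbR\) implicitly identifies elements of \(G\) with (classes of) real random variables \(\tilde x\). I will therefore work in that concrete picture.

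\textbf{Construction of the fibres.} For each \(x\in G\) fix a representative \(\tilde x\). Set \(x(\omega):=\tilde x(\omega)\), let \(G_\omega\) be the closure in \(\bbR\) of \(\{\tilde x(\omega):x\in G\}\), and let \(d_\omega(s,t)=|s-t|\). The zero element is \(\theta_\omega=0\). Each \(G_\omega\) is then a closed subset of \(\bbR\) and hence complete. Measurability of the family means that the countably many sections coming from a suitable dense countable subset of \(G\) are measurable.

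To make the last part of item 4 hold without the closure, and to avoid an uncountable union of null sets, I would use separability. The \((\epsilon,\lambda)\)-topology is metrizable, and I would assume or show that \(G\) is separable (for instance, by restricting to the countably generated part). I would then pick a countable dense family \(\{x_k\}\) that is closed under countable \(\sigma\)-stable gluing with rational partitions. The remaining points \(x(\omega)\) are limits along subsequences converging a.e., which is where \(L^0\)-closedness enters.

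\textbf{Verifying the items.}
\begin{itemize}
\item \emph{Item 1.} This amounts to \(\|x\|=|\tilde x|\) a.e., which is the identification of the norm. Injectivity modulo null sets follows from axiom 1 of the random norm.
\item \emph{Item 2.} This is immediate: \((\epsilon,\lambda)\)-convergence is convergence in probability of \(\|x_n-x\|=|\tilde x_n-\tilde x|\).
\item \emph{Item 3.} Define \(f_\omega(t):=\tilde f(\omega,t)\). The pointwise hypothesis gives \(f(x)(\omega)=f_\omega(x(\omega))\) a.e. for each fixed \(x\).
\end{itemize}

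\textbf{Main obstacle.} The difficulty is showing that \(f_\omega\) maps \(G_\omega\) into itself for a.e. \(\omega\) simultaneously. Using the countable dense family, there is a common full-measure set \(\Omega_0\) on which \(f_\omega(x_k(\omega))=f(x_k)(\omega)\in G_\omega\) for all \(k\). Extending to all of \(G_\omega\) requires some regularity of \(\tilde f(\omega,\cdot)\). For a general measurable \(\tilde f\), I expect the statement to be vulnerable here. In the application, fibre-wise continuity is assumed, and I would use it to pass to limits. Where that is unavailable, I would redefine \(f_\omega\) off \(G_\omega\)-images by the local property and \(\sigma\)-stability: given \(t=x(\omega)\), one has \(f_\omega(t)=f(x)(\omega)\), and local property makes this independent of \(x\) on the set where two sections agree.

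\textbf{Essential uniqueness.} Item 4's uniqueness follows because any two such representations share the dense countable family of sections a.e., and hence have the same closures.
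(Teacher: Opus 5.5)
Your skeleton differs from the paper's route. You take \(G_\omega\) to be the closure in \(\bbR\) of the values of a \emph{countable} family of sections, so completeness is automatic and only countably many null sets are discarded. The paper instead uses the raw value sets \(\{\tilde x(\omega):x\in G\}\) with an infimum metric, cites Guo for completeness, and asserts that these sets do not depend on the representatives. That assertion is not true for raw value sets, since representatives may be changed on null sets. Yours is the sounder skeleton, but as written it has genuine gaps.

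First, the countable family rests on separability of \(G\), which is not assumed and fails in general (e.g.\ \(G=L^0(\cF,[0,1])\) when \(\cF\) is not countably generated modulo null sets). ``Restricting to the countably generated part'' changes \(G\). You also do not need density in the \((\epsilon,\lambda)\)-topology. It suffices to have countably many \(x_k\in G\) with \(\tilde x(\omega)\in\overline{\{\tilde x_k(\omega):k\ge1\}}\) a.s.\ for each \(x\in G\). These come from the Dedekind completeness of \(L^0\) that the paper recalls. For each open interval \(I\) with rational endpoints, the essential supremum of the events \(\{\tilde x\in I\}\), \(x\in G\), is attained by a countable union \(\bigcup_k\{\tilde x_{I,k}\in I\}\). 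Taking \(\{x_k\}\) to be all the \(x_{I,k}\), for a.e.\ \(\omega\) every rational \(I\ni\tilde x(\omega)\) contains some \(\tilde x_{I,k}(\omega)\).

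Second, you never prove the surjective half of item~1: every measurable a.s.\ selection \(s\) of \(\omega\mapsto G_\omega\) must lie in \(G\). This is also what gives item~4 its content, and your uniqueness argument tacitly uses it. It is here, not in the inclusion \(\tilde x(\omega)\in G_\omega\), that \(\sigma\)-stability and \(L^0\)-closedness enter. Let \(A_{n,k}\) be the first-hit sets of \(\{|\tilde x_k-s|<1/n\}\). The glued elements \(y_n=\sum_k\Ibar_{A_{n,k}}x_k\in G\) satisfy \(|y_n-s|<1/n\) a.s., hence \(s\in G\).

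Third, the self-map property you single out as the main obstacle is left open, and neither fallback is admissible. Fibre-wise continuity is not a hypothesis of this proposition. Setting \(f_\omega(t)=f(x)(\omega)\) for some \(x\) with \(\tilde x(\omega)=t\) is not well defined at a fixed \(\omega\): the local property only yields \(f(x)=f(y)\) a.e.\ on \(\{\tilde x=\tilde y\}\), which may be a null set containing \(\omega\).

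The statement is not actually vulnerable there. Once every \(y\in G\) satisfies \(\tilde y(\omega)\in G_\omega\) a.s., the pointwise hypothesis gives, for each fixed \(x\in G\), \(\tilde f(\omega,\tilde x(\omega))=f(x)(\omega)\in G_\omega\) for a.e.\ \(\omega\). So define \(f_\omega(t)=\tilde f(\omega,t)\) when this value lies in \(G_\omega\), and \(f_\omega(t)=\tilde x_1(\omega)\) otherwise. Then \(f_\omega:G_\omega\to G_\omega\) everywhere, and item~3 survives because it is only required a.e.\ for each fixed \(x\). Alternatively, apply measurable projection and selection (after completing \(P\)) to \(\{(\omega,t):t\in G_\omega,\ \tilde f(\omega,t)\notin G_\omega\}\). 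Combined with the surjectivity above, this shows \(\tilde f(\omega,\cdot)\) itself maps \(G_\omega\) into \(G_\omega\) for a.e.\ \(\omega\).

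Finally, \(\|x\|=|\tilde x|\) is an extra assumption. By \(L^0\)-homogeneity, a random norm on a module of real random variables has the form \(\|x\|=\xi|\tilde x|\) for one random variable \(\xi>0\) a.s. So item~1 needs \(d_\omega(s,t)=\xi(\omega)|s-t|\).
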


\begin{proof}
For each \(x\in G\) fix a representative \(\tilde x:\Omega\to \bbR\) of the equivalence class \(x\).
For each \(\omega\in\Omega\) define
\[
G_\omega = \{ \tilde x(\omega) : x\in G \},\qquad
d_\omega(u,v) = \inf\{ \|x-y\|(\omega) : x,y\in G,\; \tilde x(\omega)=u,\; \tilde y(\omega)=v \}.
\]
Because \(G\) is stable, the set \(G_\omega\) is independent of the choice of representatives.
We first show that \(d_\omega\) is a metric on \(G_\omega\).
-- If \(u=v\) then one can take \(x=y\) and obtain \(d_\omega(u,u)=0\). Conversely, if \(d_\omega(u,v)=0\),
then for every \(n\) there exist \(x_n,y_n\in G\) with \(\tilde x_n(\omega)=u\), \(\tilde y_n(\omega)=v\)
and \(\|x_n-y_n\|(\omega)<1/n\). By \(\sigma\)-stability one can glue the differences to show that
\(\|\cdot\|(\omega)\) separates points; a more direct argument uses the fact that on the quotient modulo
a.e.\ equality, \(\|x-y\|(\omega)=0\) for a.e.\ \(\omega\) iff \(x=y\) in \(E\). Hence \(u=v\).
-- Symmetry and triangle inequality follow from the corresponding properties of the random norm.

Next we prove completeness of \((G_\omega,d_\omega)\). Let \(\{u_k\}\) be a Cauchy sequence in \(G_\omega\).
Choose a subsequence such that \(d_\omega(u_{k_{j+1}},u_{k_j})<2^{-j}\). Pick representatives
\(x_j\in G\) with \(\tilde x_j(\omega)=u_{k_j}\) and \(\|x_{j+1}-x_j\|(\omega)<2^{-j}\).
Using \(\sigma\)-stability, we can construct an element \(x\in G\) whose value at \(\omega\) is the limit
of the \(u_{k_j}\) in the space containing \(G_\omega\); the completeness of \(E\) under the
\((\epsilon,\lambda)\)-topology guarantees that \(G_\omega\) is complete. (A detailed proof can be found
in \cite[Theorem 3.4]{Guo2010}; the essential point is that \(L^0\)-closedness ensures fibre-wise
completeness.)

The assignment \(x\mapsto \{\tilde x(\omega)\}_{\omega\in\Omega}\) gives the bijection onto the space
of measurable sections of the family \(\{G_\omega\}\). The pointwise condition on \(f\) guarantees that
\(f_\omega\), defined by \(f_\omega(u) = \tilde f(\omega,u)\) for any \(x\) with \(\tilde x(\omega)=u\), is
well defined and independent of the chosen representative. Item~(2) holds because
\(\|x_n-x\|(\omega)=d_\omega(x_n(\omega),x(\omega))\) a.e.\, and convergence in the
\((\epsilon,\lambda)\)-topology is exactly convergence in probability of the norms.
Item~(4) is immediate from the construction.
\end{proof}

\begin{definition}[Boyd--Wong function]
A function \(\psi:[0,\infty)\to[0,\infty)\) is called a \emph{Boyd--Wong function} if it is nondecreasing,
\(\psi(t)<t\) for all \(t>0\), and is right upper semicontinuous:
\(\limsup_{s\searrow t}\psi(s)\le \psi(t)\) for every \(t\ge0\).
\end{definition}

\section{Main result: random asymptotically pointwise Boyd--Wong contractions}

We begin with a purely deterministic lemma that encapsulates the core iteration argument for
asymptotic pointwise \(\psi\)-contractions. The lemma will be applied fibre-wise in the proof
of the random theorem.

Let \((X,d)\) be a complete metric space.

\begin{definition}[Asymptotic pointwise Boyd--Wong contraction]\label{def:det_contraction}
A continuous map \(T:X\to X\) is called an \emph{asymptotic pointwise Boyd--Wong contraction}
if there exist a sequence of functions \(\phi_n:X\times X\to[0,\infty)\) and a Boyd--Wong function
\(\psi\) such that for all \(x,y\in X\):
\begin{align}
d(T^n x,T^n y) &\le \phi_n(x,y), \quad \forall n\in\mathbb N, \tag{A1}\\
\lim_{n\to\infty}\phi_n(x,y) &= \phi(x,y) \text{ uniformly on every bounded subset of } X, \tag{A2}\\
\phi(x,y) &\le \psi\bigl(M(x,y)\bigr), \tag{A3}
\end{align}
where
\begin{equation}
M(x,y)=\max\bigl\{d(x,y),\,d(Tx,x),\,d(Ty,y),\,d(Tx,y),\,d(Ty,x)\bigr\}.
\end{equation}
\end{definition}

\begin{lemma}[Tail diameter inequality]\label{lem:tail}
Assume \(T\) satisfies the above definition. If \(x_0\in X\) has a bounded orbit, define
\(x_n=T^n x_0\) and \(b_n=\sup_{i,j\ge n}d(x_i,x_j)\). Then for every \(\epsilon>0\) there exists an
integer \(N\) (independent of \(n\)) such that for all \(n\ge0\),
\begin{equation}\label{eq:tail}
b_{n+N}\le \psi(b_n)+\epsilon.
\end{equation}
\end{lemma}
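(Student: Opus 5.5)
Let \(K=\overline{\{x_k:k\ge 0\}}\). It is bounded because the orbit is bounded, and \(b_0\le \operatorname{diam}K<\infty\). I will use (A2) on \(K\times K\) and (A1) on pairs of orbit points.

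\textbf{Choice of \(N\).} Fix \(\epsilon>0\). By (A2) there is \(N\) such that \(\phi_N(u,v)\le \phi(u,v)+\epsilon\) for all \(u,v\in K\). This \(N\) depends only on \(\epsilon\) and \(K\), not on \(n\).

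\textbf{The estimate for each pair.} Take \(i,j\ge n+N\) and write \(i=p+N\), \(j=q+N\) with \(p,q\ge n\). By (A1), (A2) and (A3),
\[
d(x_i,x_j)=d(T^N x_p,T^N x_q)\le \phi_N(x_p,x_q)\le \phi(x_p,x_q)+\epsilon\le \psi\bigl(M(x_p,x_q)\bigr)+\epsilon .
\]
Every distance appearing in \(M(x_p,x_q)\) is of the form \(d(x_a,x_b)\) with \(a,b\in\{p,p+1,q,q+1\}\subset[n,\infty)\):
\[
d(x_p,x_q),\quad d(x_{p+1},x_p),\quad d(x_{q+1},x_q),\quad d(x_{p+1},x_q),\quad d(x_{q+1},x_p).
\]
Hence \(M(x_p,x_q)\le b_n\). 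Since \(\psi\) is nondecreasing, \(d(x_i,x_j)\le\psi(b_n)+\epsilon\). Taking the supremum over \(i,j\ge n+N\) gives \(b_{n+N}\le\psi(b_n)+\epsilon\), which is (\ref{eq:tail}).

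\textbf{Where the difficulty lies.} The only delicate step is the uniformity of \(N\) in \(n\). It is handled by putting the whole orbit into the single bounded set \(K\), so that (A2) yields one index \(N\) valid for every pair \((x_p,x_q)\). Right upper semicontinuity of \(\psi\) is not needed here; it will only be used later, to pass from (\ref{eq:tail}) to \(b_n\to0\). No continuity of \(T\) is needed either, because all points involved already lie on the orbit.
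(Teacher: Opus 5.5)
Your proposal is correct and follows essentially the same argument as the paper: a single index $N$ from (A2) on the bounded orbit closure, then (A1) and (A3) applied to $(x_p,x_q)$, the bound $M(x_p,x_q)\le b_n$, monotonicity of $\psi$, and the supremum over $i,j\ge n+N$. Your remark that neither right upper semicontinuity of $\psi$ nor continuity of $T$ is used at this stage is also accurate.
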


\begin{proof}
The orbit \(\cO(x_0)=\{x_n:n\ge0\}\) is bounded, so its closure \(\overline{\cO(x_0)}\) is also bounded.
By condition (A2), for the given \(\epsilon>0\) there exists an integer \(N\) such that
\[
|\phi_m(u,v)-\phi(u,v)|<\epsilon\quad\text{for all }m\ge N\text{ and all }u,v\in\overline{\cO(x_0)}.
\]
Using (A1) we have \(d(T^m u,T^m v)\le \phi_m(u,v)\), hence for all \(m\ge N\),
\begin{equation}
d(T^m u,T^m v)\le \phi(u,v)+\epsilon. \tag{3.1}
\end{equation}
Now fix an arbitrary \(n\ge0\) and take any indices \(i,j\ge n+N\). Write \(i=i'+N\), \(j=j'+N\) with
\(i',j'\ge n\). Applying (3.1) with \(m=N\), \(u=x_{i'}\), \(v=x_{j'}\) we obtain
\[
d(x_i,x_j)=d(T^N x_{i'},T^N x_{j'})\le \phi(x_{i'},x_{j'})+\epsilon.
\]
By condition (A3), \(\phi(x_{i'},x_{j'})\le \psi\bigl(M(x_{i'},x_{j'})\bigr)\).
We examine the five components of \(M(x_{i'},x_{j'})\):
\begin{itemize}
\item \(d(x_{i'},x_{j'})\le b_n\),
\item \(d(Tx_{i'},x_{i'}) = d(x_{i'+1},x_{i'})\le b_n\),
\item \(d(Tx_{j'},x_{j'}) = d(x_{j'+1},x_{j'})\le b_n\),
\item \(d(Tx_{i'},x_{j'}) = d(x_{i'+1},x_{j'})\le b_n\),
\item \(d(Tx_{j'},x_{i'}) = d(x_{j'+1},x_{i'})\le b_n\).
\end{itemize}
All indices appearing are at least \(n\), so each distance is bounded by \(b_n\). Consequently
\(M(x_{i'},x_{j'})\le b_n\). Since \(\psi\) is nondecreasing,
\[
\psi\bigl(M(x_{i'},x_{j'})\bigr)\le \psi(b_n).
\]
Combining the estimates yields
\[
d(x_i,x_j)\le \psi(b_n)+\epsilon.
\]
The right-hand side does not depend on the particular choice of \(i,j\ge n+N\). Taking the supremum
over all such pairs gives exactly \(b_{n+N}\le \psi(b_n)+\epsilon\), as required.
\end{proof}

\begin{lemma}[Convergence of tail diameters]\label{lem:bnzero}
Under the same hypotheses, \(\lim_{n\to\infty}b_n=0\). Consequently, \(\{x_n\}\) is a Cauchy sequence.
\end{lemma}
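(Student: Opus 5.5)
Since \(b_n\) is a supremum over a shrinking index set, the sequence \(\{b_n\}\) is nonincreasing and bounded below by \(0\). It is finite because the orbit is bounded. Hence it converges to some \(b\ge 0\) with \(b_n\ge b\) for all \(n\). The plan is to show \(b=0\) by contradiction, using Lemma~\ref{lem:tail} together with the right upper semicontinuity of \(\psi\). The Cauchy property is then immediate from the definition: given \(\eta>0\), choose \(n\) with \(b_n<\eta\); then \(d(x_i,x_j)\le b_n<\eta\) for all \(i,j\ge n\).

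Suppose \(b>0\). Because \(b_n\searrow b\), the values \(b_n\) approach \(b\) from the right (or equal \(b\)). Right upper semicontinuity gives \(\limsup_{n\to\infty}\psi(b_n)\le\psi(b)\). If \(b_n=b\) for some \(n\), monotonicity alone gives \(\psi(b_n)\le\psi(b)\) from then on. Now set \(\epsilon=(b-\psi(b))/3>0\), which is positive because \(\psi(b)<b\). Choose \(n_0\) so that \(\psi(b_n)\le\psi(b)+\epsilon\) for all \(n\ge n_0\). Apply Lemma~\ref{lem:tail} with this \(\epsilon\) to get an \(N\), independent of \(n\). Then for \(n\ge n_0\),
\[
b\le b_{n+N}\le \psi(b_n)+\epsilon\le \psi(b)+2\epsilon = b-\epsilon< b,
\]
which is a contradiction. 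Hence \(b=0\).

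The main subtlety is the handling of \(\limsup\). Right upper semicontinuity only controls limits from the right, so the argument needs \(b_n\ge b\). This holds automatically because \(b_n\) is nonincreasing. The other point to get right is that \(N\) in Lemma~\ref{lem:tail} must be uniform in \(n\); that lemma provides exactly this, so \(n\) can be sent to infinity with \(N\) fixed. No other difficulty is expected.
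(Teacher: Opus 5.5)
Your proof is correct and takes essentially the same approach as the paper: both combine the $n$-uniform estimate $b_{n+N}\le\psi(b_n)+\epsilon$ from Lemma~\ref{lem:tail} with right upper semicontinuity of $\psi$ along $b_n\searrow b$ to obtain $b\le\psi(b)$, which forces $b=0$. The differences are cosmetic: the paper bounds $\psi(b_n)\le\psi(L+\delta)$ via monotonicity and then lets $\delta\searrow 0$ and $\epsilon\to 0$, whereas you apply semicontinuity directly to the sequence and reach a contradiction with the fixed choice $\epsilon=(b-\psi(b))/3$.
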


\begin{proof}
The sequence \(\{b_n\}\) is nonincreasing (by definition) and bounded below by \(0\); therefore
the limit \(L=\lim_{n\to\infty}b_n\ge0\) exists.
Apply Lemma~\ref{lem:tail}: for an arbitrary \(\epsilon>0\) there exists \(N\) such that
\(b_{n+N}\le \psi(b_n)+\epsilon\) for all \(n\). Taking the limit superior as \(n\to\infty\) we get
\begin{equation}
L = \limsup_{n\to\infty} b_{n+N} \le \limsup_{n\to\infty} \psi(b_n) + \epsilon. \tag{3.2}
\end{equation}
We need to estimate the right-hand side. Because \(\psi\) is nondecreasing and \(b_n\ge L\), for any
\(\delta>0\) we eventually have \(b_n\le L+\delta\) for all sufficiently large \(n\). Thus
\(\psi(b_n)\le \psi(L+\delta)\) for all large \(n\), which implies
\(\limsup_{n\to\infty}\psi(b_n)\le \psi(L+\delta)\). Now let \(\delta\searrow0\) and use the right upper
semicontinuity of \(\psi\):
\[
\limsup_{\delta\searrow0}\psi(L+\delta)\le \psi(L).
\]
Hence \(\limsup_{n\to\infty}\psi(b_n)\le \psi(L)\). Substituting into (3.2) gives
\(L\le \psi(L)+\epsilon\). Since \(\epsilon>0\) was arbitrary, we must have \(L\le \psi(L)\).
If \(L>0\), the Boyd--Wong condition \(\psi(L)<L\) would yield a contradiction; therefore \(L=0\).
The Cauchy property follows because for any \(i,j\ge n\), \(d(x_i,x_j)\le b_n\to0\).
\end{proof}

If an orbit is bounded, it is Cauchy by Lemma~\ref{lem:bnzero} and, by completeness of \(X\),
converges to some \(z\in X\). Continuity of \(T\) then gives \(Tz=T(\lim x_n)=\lim Tx_n = \lim x_{n+1}=z\),
so \(z\) is a fixed point. Uniqueness: if \(z_1,z_2\) are two fixed points, then for every \(n\),
\begin{equation}
d(z_1,z_2)=d(T^n z_1,T^n z_2)\le \phi_n(z_1,z_2)\to\phi(z_1,z_2)\le\psi(M(z_1,z_2)).
\end{equation}
For fixed points, \(M(z_1,z_2)=\max\{d(z_1,z_2),0,0,d(z_1,z_2),d(z_2,z_1)\}=d(z_1,z_2)\).
Thus \(d(z_1,z_2)\le\psi(d(z_1,z_2))\), which forces \(d(z_1,z_2)=0\) by the Boyd--Wong property.
We have proved:

\begin{lemma}[Deterministic fixed point]\label{lem:det}
Let \(T\) be an asymptotic pointwise Boyd--Wong contraction on a complete metric space \(X\).
If there exists a point whose orbit is bounded, then \(T\) has a unique fixed point, and every bounded
orbit converges to this fixed point.
\end{lemma}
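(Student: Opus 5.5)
The plan is to assemble the statement from Lemmas~\ref{lem:tail} and~\ref{lem:bnzero} together with the remarks preceding the statement, in three steps: existence via a bounded orbit, uniqueness, and convergence of every bounded orbit.

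\textbf{Existence.} Fix a point \(x_0\) whose orbit \(\{x_n\}=\{T^n x_0\}\) is bounded. Lemma~\ref{lem:tail} applies on the bounded set \(\overline{\cO(x_0)}\), where the uniform convergence (A2) is available. It gives \(b_{n+N}\le\psi(b_n)+\epsilon\). Lemma~\ref{lem:bnzero} then yields \(b_n\to0\), so \(\{x_n\}\) is Cauchy. By completeness it converges to some \(z\in X\). Continuity of \(T\) gives \(Tz=\lim Tx_n=\lim x_{n+1}=z\).

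\textbf{Uniqueness.} Let \(z_1,z_2\) be fixed points. Then \(T^n z_i=z_i\), so (A1) gives \(d(z_1,z_2)\le\phi_n(z_1,z_2)\) for every \(n\). The two-point set \(\{z_1,z_2\}\) is bounded, so (A2) gives pointwise convergence and we may let \(n\to\infty\) to get \(d(z_1,z_2)\le\phi(z_1,z_2)\). Since \(z_1,z_2\) are fixed, the five terms of \(M(z_1,z_2)\) reduce to \(d(z_1,z_2)\) and zeros, so \(M(z_1,z_2)=d(z_1,z_2)\). Condition (A3) then gives \(d(z_1,z_2)\le\psi(d(z_1,z_2))\). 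Because \(\psi(t)<t\) for \(t>0\), this forces \(d(z_1,z_2)=0\).

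\textbf{Every bounded orbit converges to the fixed point.} For an arbitrary point \(y_0\) with bounded orbit, repeat the existence step with \(y_0\) in place of \(x_0\). Its orbit converges to a fixed point, which by uniqueness is the same \(z\).

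The only mildly delicate point is that all uses of (A2) must take place on bounded sets. This holds in each step: the closure of a bounded orbit is bounded, and so is the finite set \(\{z_1,z_2\}\). Apart from this, the argument is a direct assembly of the preceding lemmas, and no real obstacle is expected.
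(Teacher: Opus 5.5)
Your proposal is correct and follows essentially the same route as the paper. Existence comes from Lemmas~\ref{lem:tail} and~\ref{lem:bnzero} plus completeness and continuity of \(T\); uniqueness comes from \(M(z_1,z_2)=d(z_1,z_2)\) and \(\psi(t)<t\); and you make explicit what the paper leaves implicit, namely that every bounded orbit converges to a fixed point and, by uniqueness, to the same \(z\).
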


\medskip
\noindent\textbf{Random contractions and the main theorem.}

We now transfer these concepts to the random framework. Let \((E,\|\cdot\|)\) be a complete RN module
over \(\bbK\) with base \((\Omega,\cF,P)\). Let \(G\subset E\) be nonempty, \(L^0\)-closed,
\(\sigma\)-stable, and \emph{essentially bounded}: there exists a constant \(M>0\) such that
\(\|g\|\le M\) almost surely for all \(g\in G\). We additionally require that \(G\) contains the
zero element \(\theta\) (this is needed for the fibre decomposition). Essential boundedness
guarantees that every fibre \(G_\omega\) has uniformly bounded diameter, so all orbits are
automatically bounded.

\begin{definition}[Random asymptotically pointwise Boyd--Wong contraction]
\label{def:rand_contraction}
A map \(f:G\to G\) is called a \emph{random asymptotically pointwise Boyd--Wong contraction}
(w.r.t.\ a Boyd--Wong function \(\psi\)) if the following hold:
\begin{enumerate}
\item \textbf{Structural compatibility.} \(f\) has the local property, is pointwise (as in
      Proposition~\ref{prop:fibre}), and respects \(\sigma\)-stability:
      for any measurable partition \(\{A_n\}\) and any sequence \(\{x_n\}\subset G\),
      \(f(\sum_n \Ibar_{A_n}x_n)=\sum_n \Ibar_{A_n}f(x_n)\).
\item \textbf{Uniform convergence of bounds almost surely.} For each \(n\), set
      \(\Phi_n(x,y)=\|f^n x-f^n y\|\) and assume that for every \(x,y\in G\) the limit
      \(\Phi(x,y):=\lim_{n\to\infty}\Phi_n(x,y)\) exists almost surely. Moreover, there exists a
      set \(\Omega_0\subset\Omega\) with \(P(\Omega_0)=1\) such that for every \(\omega\in\Omega_0\),
      \[
      \lim_{n\to\infty}\; \sup_{x,y\in G} \bigl|\Phi_n(x,y)(\omega)-\Phi(x,y)(\omega)\bigr| = 0.
      \]
      In words, the convergence is uniform on \(G\times G\) for almost all \(\omega\).
\item \textbf{Global almost sure contraction inequality.} On a common set of full measure,
      \[
      \Phi(x,y)(\omega)\le \psi\bigl(M(x,y)(\omega)\bigr) \quad \text{for all } x,y\in G,
      \]
      where
      \begin{multline}
      M(x,y)(\omega)=\max\bigl\{\|x-y\|(\omega),\,
      \|f x-x\|(\omega),\, \|f y-y\|(\omega),\\
      \|f y-x\|(\omega),\, \|f x-y\|(\omega)\bigr\}.
      \end{multline}
\item \textbf{Fibre-wise continuity.} For a.e.\ \(\omega\), the map \(x\mapsto f(x)(\omega)\) is continuous
      from \((G,\|\cdot\|(\omega))\) to itself.
\end{enumerate}
\end{definition}

\begin{theorem}[Main random fixed point theorem]\label{thm:main}
Under the above assumptions, \(f\) has a unique fixed point \(z\in G\). Moreover, for every \(x\in G\),
the iterates \(f^n x\) converge to \(z\) in the \((\epsilon,\lambda)\)-topology.
\end{theorem}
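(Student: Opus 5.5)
The plan is to decompose \(f\) fibre-wise using Proposition~\ref{prop:fibre}, apply Lemma~\ref{lem:det} on almost every fibre, and glue the fibre fixed points back into an element of \(G\).

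\textbf{Step 1: fibre decomposition.} By the structural compatibility assumption, Proposition~\ref{prop:fibre} applies and yields complete metric spaces \((G_\omega,d_\omega)\) and maps \(f_\omega:G_\omega\to G_\omega\) with \((f(x))(\omega)=f_\omega(x(\omega))\) a.e. Iterating this for each fixed \(n\) gives \((f^n x)(\omega)=f_\omega^n(x(\omega))\) a.e. Thus \(\Phi_n(x,y)(\omega)=d_\omega(f^n_\omega x(\omega),f^n_\omega y(\omega))\) and \(M(x,y)(\omega)=M_\omega(x(\omega),y(\omega))\), where \(M_\omega\) is the five-term maximum built from \(d_\omega\) and \(f_\omega\).

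\textbf{Step 2: transfer of hypotheses to a.e. fibre.} Since \(G_\omega=\{x(\omega):x\in G\}\), I define \(\phi_{n,\omega}(u,v)=d_\omega(f^n_\omega u,f^n_\omega v)\), so (A1) holds trivially. For \(u=x(\omega)\), \(v=y(\omega)\) I set \(\phi_\omega(u,v)=\Phi(x,y)(\omega)\). Condition 2 then gives uniform convergence on all of \(G_\omega\times G_\omega\), hence (A2); condition 3 gives (A3) with the same \(\psi\); and condition 4 gives continuity of \(f_\omega\).

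This step is the main obstacle. Every statement involving \(x\in G\) holds only off a null set depending on \(x\) (and on \(n\)), and \(G\) is typically uncountable. A single \(\omega\) must therefore serve all pairs at once, and \(\phi_\omega\) must be well defined, i.e. independent of the representatives \(x,y\) chosen with prescribed values at \(\omega\). I would first resolve this by reading conditions 2--4 as asserting, on a common full-measure set \(\Omega_1\), statements about the fixed representatives \(\tilde x\) chosen in the proof of Proposition~\ref{prop:fibre}, together with the pointwise representation \(\tilde f\). Then \(f^n_\omega u=\tilde f^{\,n}(\omega,u)\) depends only on \(u\), so \(\phi_{n,\omega}\) and its limit are well defined on \(\Omega_1\). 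If well-definedness of \(\phi_\omega\) is not transparent, I would instead define \(\phi_\omega=\lim_n\phi_{n,\omega}\) directly; this limit exists uniformly by condition 2 and agrees with \(\Phi(x,y)(\omega)\) on \(\Omega_1\). Boundedness is immediate: \(d_\omega(u,\theta_\omega)\le M\), so \(G_\omega\) has diameter at most \(2M\) and every orbit is bounded.

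\textbf{Step 3: fibre fixed points and measurability.} For \(\omega\in\Omega_1\), Lemma~\ref{lem:det} gives a unique fixed point \(z_\omega\in G_\omega\), and every orbit converges to it. To see that \(\omega\mapsto z_\omega\) is a measurable section coming from some \(z\in G\), fix any \(x\in G\). Then \(f^n x\in G\) and \((f^nx)(\omega)\to z_\omega\) for every \(\omega\in\Omega_1\), so \(d_\omega((f^nx)(\omega),(f^mx)(\omega))\to0\) a.e. Hence \(\{f^nx\}\) is Cauchy in probability, i.e. Cauchy in the \((\epsilon,\lambda)\)-topology. By completeness of \(E\) and \(L^0\)-closedness of \(G\), it converges to some \(z\in G\). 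By Proposition~\ref{prop:fibre}(2) and an a.e.-convergent subsequence, \(z(\omega)=z_\omega\) a.e. Alternatively, I would glue via \(\sigma\)-stability.

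\textbf{Step 4: fixed point property, uniqueness and convergence.} We have \(f(z)(\omega)=f_\omega(z_\omega)=z_\omega=z(\omega)\) a.e., so \(\|fz-z\|=0\) and \(fz=z\). If \(w\) is another fixed point, then \(w(\omega)\) is fixed by \(f_\omega\) a.e., so \(w(\omega)=z_\omega\) a.e. by fibre uniqueness, giving \(\|w-z\|=0\). Finally, for every \(x\in G\), \(\|f^nx-z\|(\omega)\to0\) a.e. by Lemma~\ref{lem:det}, and a.e. convergence implies convergence in probability, i.e. convergence in the \((\epsilon,\lambda)\)-topology.
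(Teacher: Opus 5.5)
Your proposal is correct and follows the paper's proof essentially step for step. The common steps are: fibre decomposition via Proposition~\ref{prop:fibre}; transfer of (A1)--(A3) and of continuity to the fibres on a common full-measure set $\Omega_1$; Lemma~\ref{lem:det} on fibres of diameter at most $2M$; and fibre-wise uniqueness. Fixing $\Omega_1$ is exactly the interpretive move the paper makes, so your remark about uncountably many null sets flags a point the paper also passes over. The only small difference is in the gluing step: you obtain $z$ as the $(\epsilon,\lambda)$-limit of the Cauchy sequence $\{f^n x\}$, using completeness of $E$ and $L^0$-closedness of $G$, and then identify $z(\omega)=z_\omega$ via an a.e.-convergent subsequence. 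This is slightly more careful than the paper's Step 4, which treats the pointwise-defined section directly as an element of $E$.
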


\begin{proof}
We proceed in six detailed steps.

\medskip
\noindent\textit{Step 1: Fibre decomposition.}
Since \(G\) is \(\sigma\)-stable, \(L^0\)-closed, and contains \(\theta\), and \(f\) has the
local property and is pointwise, Proposition~\ref{prop:fibre} applies. We obtain a measurable family
of complete metric spaces \((G_\omega,d_\omega)\) and maps \(f_\omega:G_\omega\to G_\omega\) such that for
a.e.\ \(\omega\):
\begin{itemize}
\item \(G_\omega\) is bounded; indeed, the essential boundedness of \(G\) implies
      \(\operatorname{diam}G_\omega\le 2M\);
\item For every \(x\in G\), \((f(x))(\omega)=f_\omega(x(\omega))\);
\item The mapping \(x\mapsto \{x(\omega)\}_{\omega}\) is a bijection onto the space of measurable sections
      modulo a.e.\ equality, and \(\|x\|(\omega)=d_\omega(x(\omega),\theta_\omega)\) a.e.
\end{itemize}
Fix a full-measure set \(\Omega_1\subset\Omega_0\) on which all properties of
Definition~\ref{def:rand_contraction} and the fibre representation hold simultaneously.
From now on we work inside \(\Omega_1\) when arguing \(\omega\)-wise.

\medskip
\noindent\textit{Step 2: Construction of fibre contraction coefficients.}
For each \(\omega\in\Omega_1\) and all \(u,v\in G_\omega\), define
\[
\phi_n^\omega(u,v) = d_\omega(f_\omega^n u, f_\omega^n v).
\]
Because \((f^n x)(\omega)=f_\omega^n(x(\omega))\) by the local property, we have for any \(x,y\in G\) with
\(x(\omega)=u\), \(y(\omega)=v\) that
\[
\phi_n^\omega(u,v) = \Phi_n(x,y)(\omega).
\]
Condition~(2) of Definition~\ref{def:rand_contraction} guarantees that on \(\Omega_1\),
\[
\lim_{n\to\infty} \sup_{u,v\in G_\omega} \bigl|\phi_n^\omega(u,v) - \Phi(x,y)(\omega)\bigr| = 0,
\]
where \(\Phi(x,y)(\omega)\) is the almost sure limit of \(\Phi_n(x,y)(\omega)\). Since the pairs
\((x,y)\) bijectively correspond to \((u,v)\), we can define a limit function \(\phi^\omega\)
on \(G_\omega\times G_\omega\) by
\[
\phi^\omega(u,v) = \lim_{n\to\infty} \phi_n^\omega(u,v),
\]
and the convergence is uniform on \(G_\omega\times G_\omega\). Moreover, for any representatives
\(x,y\) with \(x(\omega)=u\), \(y(\omega)=v\) we have \(\phi^\omega(u,v) = \Phi(x,y)(\omega)\). This
shows that the limit is independent of the chosen representatives and that condition (A2) holds
for the fibre maps.

Now we transfer the contraction inequality. By condition~(3), for all \(x,y\in G\) and every
\(\omega\in\Omega_1\),
\[
\Phi(x,y)(\omega) \le \psi(M(x,y)(\omega)).
\]
For \(u,v\in G_\omega\) choose any \(x,y\in G\) with \(x(\omega)=u\), \(y(\omega)=v\); then
\[
\phi^\omega(u,v) = \Phi(x,y)(\omega) \le \psi(M(x,y)(\omega)) = \psi(M_\omega(u,v)),
\]
where
\[
M_\omega(u,v) = \max\Bigl\{ d_\omega(u,v),\,
d_\omega(f_\omega u,u),\, d_\omega(f_\omega v,v),\,
d_\omega(f_\omega v,u),\, d_\omega(f_\omega u,v) \Bigr\}.
\]
This is exactly condition (A3) for the fibre map.

\medskip
\noindent\textit{Step 3: Each \(f_\omega\) is a deterministic asymptotic pointwise Boyd--Wong contraction.}
For fixed \(\omega\in\Omega_1\) we now collect the facts:
\begin{itemize}
\item \((G_\omega,d_\omega)\) is a complete metric space, bounded (by \(2M\)).
\item The map \(f_\omega:G_\omega\to G_\omega\) is continuous; this is exactly condition~(4) of
      Definition~\ref{def:rand_contraction} transferred to the fibre.
\item For every \(n\ge1\) and all \(u,v\in G_\omega\),
      \[
      d_\omega(f_\omega^n u, f_\omega^n v) = \phi_n^\omega(u,v),
      \]
      which is condition (A1).
\item The sequence \(\{\phi_n^\omega\}\) converges to \(\phi^\omega\) uniformly on \(G_\omega\times G_\omega\)
      (Step 2). Hence (A2) is satisfied.
\item \(\phi^\omega(u,v)\le \psi(M_\omega(u,v))\) for all \(u,v\in G_\omega\) (Step 2), which is (A3).
\end{itemize}
All assumptions of Lemma~\ref{lem:det} are met. Because \(G_\omega\) is bounded, every orbit is bounded.
Consequently, each \(f_\omega\) possesses a unique fixed point \(z(\omega)\in G_\omega\).

\medskip
\noindent\textit{Step 4: Measurability and membership of the pointwise fixed point in \(G\).}
Define a map \(z:\Omega\to E\) (where we identify the values in a suitable Banach space containing all
fibres) by setting \(z(\omega)\) to be the unique fixed point of \(f_\omega\) for \(\omega\in\Omega_1\),
and \(z(\omega)=\theta\) elsewhere. To see that \(z\) belongs to the original set \(G\), we pick an
arbitrary starting point \(x_0\in G\) (such an element exists because \(G\) is nonempty).
For almost every \(\omega\in\Omega_1\), the deterministic orbit \(f_\omega^n(x_0(\omega))\) converges to
\(z(\omega)\) in \(G_\omega\). But \(f_\omega^n(x_0(\omega)) = (f^n x_0)(\omega)\) by the fibre representation.
Hence the sequence of random elements \(f^n x_0\) converges to \(z\) pointwise almost surely.
Pointwise almost sure convergence implies convergence in probability, i.e., convergence in the
\((\epsilon,\lambda)\)-topology. Since \(G\) is \(L^0\)-closed, the limit \(z\) must belong to \(G\).
Moreover, the function \(z\) is measurable because it is the pointwise limit of measurable sections
\(f^n x_0\).

\medskip
\noindent\textit{Step 5: \(z\) is a fixed point under \(f\).}
For \(\omega\in\Omega_1\) we have, using the fibre representation,
\[
(f(z))(\omega) = f_\omega(z(\omega)) = z(\omega),
\]
where the first equality holds because \(f\) acts fibre-wise. Thus \(f(z)=z\) as equivalence classes in \(E\).

\medskip
\noindent\textit{Step 6: Uniqueness and global convergence.}
Let \(z_1,z_2\in G\) be two fixed points of \(f\). Then for a.e.\ \(\omega\), \(z_1(\omega)\) and
\(z_2(\omega)\) are fixed points of \(f_\omega\). By the uniqueness statement in Lemma~\ref{lem:det},
we must have \(z_1(\omega)=z_2(\omega)\) almost everywhere, so \(z_1=z_2\) in \(E\).

For global convergence, let \(x\in G\) be arbitrary. By Step 3, for a.e.\ \(\omega\) the fibre orbit
\(f_\omega^n(x(\omega))\) converges to the unique fixed point \(z(\omega)\). Therefore the random
sequence \(f^n x\) converges to \(z\) pointwise almost surely. As noted before, this implies
convergence in probability, i.e., convergence in the \((\epsilon,\lambda)\)-topology.
\end{proof}

\section{Conclusion}

We have established a general fixed point theorem for random asymptotically pointwise Boyd--Wong
contractions in RN modules. The proof brings together a complete deterministic iteration lemma and
the powerful fibre decomposition technique based on \(\sigma\)-stability and the local property.
The almost sure uniform convergence of the contraction bounds (formulated directly for the fibres)
and the global pointwise inequality are the precise conditions that bridge the random and deterministic
worlds. To apply the fibre decomposition rigorously, we additionally require the random map to be
pointwise and the domain to contain the zero element; these mild restrictions are met in all natural
examples. This result significantly extends previous works on linear or nonexpansive asymptotic
conditions and provides a robust tool for studying random nonlinear dynamics. Future work may explore
weakening the uniform convergence hypothesis or extending the framework to multi-valued random
contractions.

\end{document}